\documentclass[a4paper,12pt,reqno]{amsart}

 \usepackage[T2A]{fontenc}
 \usepackage[utf8]{inputenc}
 \usepackage[ukrainian,english]{babel}
 \usepackage{amssymb,upref}
\usepackage{amsthm}

\usepackage{graphicx}
\usepackage{mathrsfs}

\usepackage[text={130mm,200mm}]{geometry}

\theoremstyle{plain}
\newtheorem{theorem}{Theorem}
\newtheorem*{theorem*}{Theorem}
\newtheorem{corollary}{Corollary}
\newtheorem*{corollary*}{Corollary}
\newtheorem{lemma}{Lemma}
\newtheorem*{lemma*}{Lemma}

\newtheorem*{proposition*}{Proposition}

\newtheorem*{conjecture*}{Conjecture}
\theoremstyle{definition}
\newtheorem{definition}{Definition}
\newtheorem*{definition*}{Definition}
\theoremstyle{remark}
\newtheorem{remark}{Remark}
\newtheorem*{remark*}{Remark}

\begin{document}

\title[Some applications of binary numeration systems with nonzero redundancy]{Some applications of binary numeration systems with nonzero redundancy to the theory of locally complex functions}

\author[S. Vaskevych]{Svitlana Vaskevych}
\email{svetaklymchuk@imath.kiev.ua}
\address{Institute of Mathematics of NAS of Ukraine, ORCID: 0000-0009-0005-3979}

\author[Yu. Vovk]{Yuliia Vovk}
\email{freeeidea@ukr.net}
\address{K. D. Ushynskyi Chernihiv Regional Institute of Postgraduate Pedagogical Education, Chernihiv, Ukraine}

\author[O. Pratsiovytyi]{Oleksandr Pratsiovytyi}
\email{o.m.pratsovytyi@udu.edu.ua}
\address{Ukrainian State Dragomanov University, Kyiv, Ukraine}

\thanks{This work was supported by a grant from the Simons Foundation (SFI-PD-Ukraine-00014586 V.S.)}

\subjclass[2020]{26A30, 28A78, 28A80} 
\keywords{classical $s$--adic numeration system, numeration systems with nonzero redundancy, cylinder set, function with fractal properties, level set, function of unbounded variation.}

\begin{abstract}
In this paper, we study a numeration system with a non--integer base $a>1$ over the binary alphabet $A=\{0,1\}$:
$$\left[0;\frac{r}{a-1}\right]\ni
x=\sum\limits_{n=1}^{\infty}\frac{\alpha_n}{a^n}
\equiv
\Delta^{r_a}_{\alpha_1\alpha_2\ldots\alpha_n\ldots},
\quad
\alpha_n\in A.$$
We investigate the geometry of $r_a$--representation of numbers, including the geometric interpretation of digits, the structure of cylinder overlaps, and the associated metric properties.

The main object of our study is a nowhere monotone function of unbounded variation defined by
$$f\left(x=\sum\limits_{n=1}^{\infty}\frac{\alpha_n}{2^n}\right)
=\sum\limits_{n=1}^{\infty}\frac{\alpha_n}{a^n},$$
where the classical binary representation of the argument is assumed not to end with the infinite period $(1)$. We derive a system of two functional equations satisfied by $f$. We prove that the function has fractal level sets and a self--affine graph. We also evaluate the integral of $f$ over the interval $[0,1]$.
\end{abstract}

\maketitle

\section{Introduction}
We refer to a function as \emph{locally complex} if every arbitrarily small interval of its domain contains nonempty sets of features of various types (e.g., variational, integro-differential, etc.). Traditionally, this class includes continuous functions that are nowhere monotone, non differentiable, singular, as well as functions having a countable dense set of discontinuities while remaining continuous at all other points. The present paper is devoted to the latter class of functions.

Let $a>1$ be a fixed real number, let $r$ be a positive integer satisfying $r\geq a-1$, let $A=\{0,1,\ldots,r\}$ be the alphabet, and let $L=A\times A\times\cdots$ denote the set of all infinite sequences over the alphabet $A$.

Consider the mapping $\gamma:L\to\left[0;\frac{r}{a-1}\right]$ defined by
\[
\gamma((\alpha_n))
=
\sum\limits_{n=1}^{\infty}\frac{\alpha_n}{a^n}
\equiv
\Delta^{r_a}_{\alpha_1\alpha_2\ldots\alpha_n\ldots}
=x
\in
\left[0;\frac{r}{a-1}\right],
\quad
(\alpha_n)\in L.
\]
The symbolic expression $\Delta^{r_a}_{\alpha_1\alpha_2\ldots\alpha_n\ldots}$
is called the \emph{$r_a$--representation} of the number $x$, while $\alpha_n$ is referred to as the \emph{$n$th digit} of this representation.

The binary alphabet case, i.e., $r=1$, deserves special attention because of the minimality of the alphabet $A={0,1}$ and the technical convenience of binary numeral systems \cite{pr3}. In this case, every number in the interval $\left[0;\frac{r}{a-1}\right]$ can be represented as the sum of a subseries of the geometric series $\frac{1}{a}+\frac{1}{a^2}+\cdots+\frac{1}{a^n}+\cdots$, is the attractor of the iterated function system generated by the contractions $\gamma_i(x)=\frac{1}{a}x+\frac{i}{a}$, $i=0,1$.

If $a=r+1$, then the $r_a$-representation coincides with the classical positional representation of numbers in the integer base $a$. This representation has zero redundancy: every number has at most two representations, and the set of numbers admitting two distinct representations is countable. This case has been extensively studied. For such numeral systems, the topological, metric, fractal, and probabilistic aspects of the theory are well developed \cite{pr4}, and numerous applications have been established. The theory for non-integer bases has been developing since the pioneering work of Rényi \cite{1} in 1957 and was further developed by Parry \cite{2}. Since then, it has experienced rapid progress \cite{erd, 14_Komornik, 22_komornik, komornik_cant}.

\begin{definition}
Assume that $a$ is rational. A number $x$ is called \emph{$r_a$--rational} if it admits an $r_a$--representation whose period is $(0)$.
\end{definition}

However, not every rational number is $r_a$--rational. For example, let $x$ be a number with the purely periodic $r_a$--representation $\Delta^{r_a}_{(c_1\ldots c_p)}$, whose period contains at least two distinct digits. Then $x$ is rational but not $r_a$--rational. Indeed,
\begin{align*}
x = & \Delta^{r_a}_{(c_1\ldots c_p)}
= \left(\frac{c_1}{a}+\cdots+\frac{c_p}{a^p}\right)+ \frac{1}{a^p}\left(\frac{c_1}{a}+\cdots+\frac{c_p}{a^p}\right)+ \frac{1}{a^{2p}}\left(\frac{c_1}{a}+\cdots+\frac{c_p}{a^p}\right) +\cdots\\ 
= & \frac{c}{1-\frac{1}{a^p}},
\end{align*}
where $c=\frac{c_1}{a}+\cdots+\frac{c_p}{a^p}$ is a rational number but, in general, is not $r_a$-rational.

For integer bases $a$ with the standard alphabet $(r=a-1)$, the criterion for the rationality of a number is well known: a number $x\in[0,1]$ is rational if and only if its representation in the classical base-$a$ numeral system is eventually periodic. For non-integer bases, however, no analogous criterion is known.

\section{Geometry of the $r_a$--representation}

Each pair of parameters $a$ and $r$ gives rise to its own geometry. This geometry is partially characterized by the properties of cylinder sets. The following cases deserve special attention: $a$ is an integer, $a$ is a rational, and $a$ is an irrational number.
The first case was studied in papers \cite{1_dist_rv,pr_rat_ms_26,prats_vynnysh,pratsmykyt}.

\begin{definition}
Let $(c_1,c_2,\ldots,c_k)$ be a fixed ordered sequence of elements of the alphabet.
The \emph{cylinder} (or \emph{$r_a$--cylinder}) of rank $k$ with base $c_1c_2\ldots c_k$ is the set
$$\Delta^{r_a}_{c_1c_2\ldots c_k} = \left\{
x: x=\Delta^{r_a}_{c_1\ldots c_k\alpha_1\alpha_2\ldots\alpha_n\ldots},
(\alpha_n)\in L \right\}. $$
\end{definition}

The cylinders $\Delta^{r_a}_{c_1\ldots c_{m-1}j}$ and $\Delta^{r_a}_{c_1\ldots c_{m-1}[j+1]}$, where $j\in\{0,1,\ldots,r-1\}$, are called adjacent. They are subcylinders of the same parent cylinder $\Delta^{r_a}_{c_1\ldots c_{m-1}}$ of the preceding rank.

The cylinder sets satisfy the following properties:

1) $\Delta^{r_a}_{c_1\ldots c_k} =\Delta^{r_a}_{c_1\ldots c_k0} \cup \Delta^{r_a}_{c_1\ldots c_k1} \cup \cdots \cup \Delta^{r_a}_{c_1\ldots c_kr}$;

2) $\Delta^{r_a}_{c_1\ldots c_k}=[u;u+d]$, where $u=\sum\limits_{i=1}^{k}\frac{c_i}{a^i}$, $d=\frac{r}{a^{k}(a-1)}$;

3) the length of the cylinder is
$|\Delta^{r_a}_{c_1\ldots c_k}| =d=\frac{r}{a^k(a-1)} \to 0 \quad (k\to\infty)$;

4) $ \bigcap\limits_{k=1}^{\infty} \Delta^{r_a}_{c_1\ldots c_k} = \Delta^{r_a}_{c_1c_2\ldots c_k\ldots} =x$ for every sequence $(c_k)\in L$;

5) $\min \Delta_{c_1c_2\ldots c_kc} < \min \Delta_{c_1c_2\ldots c_k[c+1]}, \quad 0\leqslant c\leqslant r-1$;

6) $\Delta^{r_a}_{\alpha_1\ldots\alpha_k} = \Delta^{r_a}_{\beta_1\ldots\beta_k}
\iff \sum\limits_{i=1}^{k} a^{-i}(\alpha_i-\beta_i)=0$;

7) $\Delta^{r_a}_{c_1\ldots c_kc} \cap \Delta^{r_a}_{c_1\ldots c_k[c+1]} = \left[\Delta^{r_a}_{c_1\ldots c_k[c+1](0)}; \Delta^{r_a}_{c_1\ldots c_kc(r)}\right] \neq \varnothing$; 

8) the length of the overlap is
$$
\left|\Delta^{r_a}_{c_1\ldots c_{k-1}c}
\cap
\Delta^{r_a}_{c_1\ldots c_{k-1}[c+1]}\right|
=
\Delta^{r_a}_{c_1\ldots c_{k-1}c(r)}
-
\Delta^{r_a}_{c_1\ldots c_{k-1}[c+1](0)}
=
\frac{r-a+1}{a^k(a-1)};
$$

9) the condition
$$
\Delta^{r_a}_{c_1\ldots c_{k-1}c}
\cap
\Delta^{r_a}_{c_1\ldots c_{k-1}[c+1]}
=
\Delta^{r_a}_{c_1\ldots c_{k-1}cr}
=
\Delta^{r_a}_{c_1\ldots c_{k-1}[c+1]0}
$$
is equivalent to
$$
\frac{c}{a^k} + \frac{r}{a^{k+1}} = \frac{c+1}{a^k}, \qquad \text{i.e., } r=a,
$$
which is possible only when $a$ is an integer;

10) the equality
$\left|\Delta^{r_a}_{c_1\ldots c_kc_{k+1}}\right| = \frac{1}{2} \left|\Delta^{r_a}_{c_1\ldots c_k}\right|$ holds only if $a=2$;

11) the condition
$$\Delta^{r_a}_{c_1\ldots c_{k-1}c} \cap \Delta^{r_a}_{c_1\ldots c_{k-1}[c+1]} =
\Delta^{r_a}_{c_1\ldots c_{k-1}c\underbrace{r\ldots r}_m} = \Delta^{r_a}_{c_1\ldots c_{k-1}[c+1]\underbrace{0\ldots 0}_m}$$ is equivalent to
$$
\frac{c}{a^k} + \frac{r}{a^{k+1}} +\cdots+ \frac{r}{a^{k+m}} = \frac{c+1}{a^k},
\qquad \text{i.e., } r=\frac{a^m(a-1)}{a^m-1}.
$$

\begin{remark}
Properties 8)--11) describe the specific features of the overlaps of $r_a$--cylinders.
\end{remark}

\begin{lemma}
If $r=1$, then $\Delta^{r_a}_{0\underbrace{1\ldots 1}_k}\subset \Delta^{r_a}_{0}\cap \Delta^{r_a}_{1}$ for all $k\geqslant k_0$, where $k_0$ is the smallest solution of the inequality
\begin{equation}\label{(*)} 
a^{k-1}+a^{k-2}+\cdots+a^0\ge a^k. 
\end{equation}
\end{lemma}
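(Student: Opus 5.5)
By property 2), with $r=1$ every cylinder of rank $m$ is a closed interval of length $\frac{1}{a^m(a-1)}$ starting at $\sum_{i\le m} c_i a^{-i}$. The plan is therefore to write both sides of the claimed inclusion as explicit intervals and compare their endpoints. We assume throughout, as the setting with $r=1\ge a-1$ requires, that $1<a\le 2$.

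First, by property 7), $\Delta^{r_a}_0\cap\Delta^{r_a}_1=\left[\Delta^{r_a}_{1(0)};\Delta^{r_a}_{0(1)}\right]=\left[\frac1a;\frac{1}{a(a-1)}\right]$. This is nonempty because $a\le 2$.

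Second, by property 2),
$$\Delta^{r_a}_{0\underbrace{1\ldots1}_k}=\left[u_k;\,u_k+\frac{1}{a^{k+1}(a-1)}\right],\qquad u_k=\frac{1}{a^2}+\cdots+\frac{1}{a^{k+1}}.$$

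For the right endpoint there is nothing to prove. The right endpoint of the cylinder equals $\Delta^{r_a}_{0(1)}=\frac{1}{a(a-1)}$, since after the prefix $01\ldots1$ all further digits are $1$. So it coincides with the right endpoint of the overlap for every $k$.

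The whole statement therefore reduces to the left-endpoint inequality $u_k\ge \frac1a$. Multiplying by $a^{k+1}$, this becomes
$$a^{k-1}+a^{k-2}+\cdots+a^0\ge a^k,$$
which is exactly inequality \eqref{(*)}. Hence the inclusion holds precisely for those $k$ satisfying \eqref{(*)}.

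It remains to show that the set of such $k$ is an up-set, so that it is $\{k\ge k_0\}$. This is the step needing care, since "smallest solution" only gives us the threshold $k_0$. I would use two facts.

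\textbf{Monotonicity.} The cylinders are nested: $\Delta^{r_a}_{01\ldots1}$ with $k+1$ ones is contained in the one with $k$ ones, by property 1). Equivalently, $u_k$ is increasing in $k$. So once $u_{k_0}\ge\frac1a$, the same holds for all $k\ge k_0$.

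\textbf{Existence of $k_0$.} We must also check that \eqref{(*)} has a solution at all. Since $u_k\to \frac{1}{a(a-1)}$, we need $\frac{1}{a(a-1)}>\frac1a$, i.e. $a<2$. Then $u_k\ge\frac1a$ for large $k$.

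The boundary case $a=2$ is where I expect the main obstacle. There \eqref{(*)} reads $2^k-1\ge 2^k$, which never holds. So the lemma is vacuous, or $k_0$ is undefined, and I would remark that we take $a<2$, the genuinely redundant case. Subject to this, the proof consists of the explicit endpoint computation above.
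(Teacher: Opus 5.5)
Your argument is correct and is essentially the paper's: both write the cylinder $\Delta^{r_a}_{01\ldots1}$ (with $k$ ones) as an interval with right endpoint $\frac{1}{a(a-1)}$, and both reduce the inclusion to $\frac{1}{a^2}+\cdots+\frac{1}{a^{k+1}}\ge\frac1a$, which becomes the stated inequality after multiplying by $a^{k+1}$. Your extra observations make explicit what the paper leaves implicit: the left endpoint increases in $k$, so the solution set is exactly $\{k\ge k_0\}$, and $k_0$ exists only when $a<2$.
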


\begin{proof}
Indeed, $\Delta^{r_a}_{0\underbrace{1\ldots 1}_k}=\left[\Delta^{r_a}_{0\underbrace{1\ldots 1}_k(0)}; \frac{1}{a(a-1)}\right]$, $\Delta^{r_a}_{1}=\left[\frac{1}{a}; \frac{1}{a-1}\right]$.

Hence, $\Delta^{r_a}_{0\underbrace{1\ldots 1}_k}\subset\Delta^{r_a}_{1}$ whenever
$\Delta^{r_a}_{0\underbrace{1\ldots 1}_k(0)}\geqslant\frac{1}{a}$, that is,
$\frac{1}{a^2}+\frac{1}{a^3}+\ldots+\frac{1}{a^{k+1}}\geqslant\frac{1}{a}$,
which is equivalent to \eqref{(*)}.
\end{proof}

\begin{lemma}
If $r=1$ and $a=\frac{1+\sqrt5}{2}$, then $\Delta^{r_a}_{0}\cap \Delta^{r_a}_{1}
= \Delta^{r_a}_{011} = \Delta^{r_a}_{100}$.
\end{lemma}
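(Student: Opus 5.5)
The plan is to compute every set in the statement explicitly as an interval, using property 2) with $r=1$, and then compare endpoints with the help of the golden-ratio identity $a^2=a+1$. Equivalently, $\frac{1}{a}+\frac{1}{a^2}=1$, and this gives $\frac{1}{a-1}=a$.

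First I will write out the two first-rank cylinders. Property 2) with $k=1$ gives
\[
\Delta^{r_a}_{0}=\left[0;\frac{1}{a(a-1)}\right],
\qquad
\Delta^{r_a}_{1}=\left[\frac{1}{a};\frac{1}{a}+\frac{1}{a(a-1)}\right].
\]
By property 7) their intersection is $\left[\Delta^{r_a}_{1(0)};\Delta^{r_a}_{0(1)}\right]=\left[\frac1a;\frac{1}{a(a-1)}\right]$. From $a-1=\frac1a$ we get $\frac{1}{a(a-1)}=1$. So the intersection is $[a^{-1};1]$, and its length $1-\frac1a=\frac{1}{a^2}$ agrees with property 8).

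Next I will compute the rank-3 cylinders, each of which has length $\frac{1}{a^3(a-1)}=\frac{1}{a^2}$. For the first one, $\Delta^{r_a}_{011}=[u;u+a^{-2}]$ with $u=\frac{1}{a^2}+\frac{1}{a^3}=\frac{1}{a}\left(\frac1a+\frac1{a^2}\right)=\frac1a$, so it equals $[a^{-1};a^{-1}+a^{-2}]=[a^{-1};1]$. For the second one, $\Delta^{r_a}_{100}=[a^{-1};a^{-1}+a^{-2}]=[a^{-1};1]$ as well. Comparing the three intervals gives the claimed equalities.

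There is no real obstacle here. The only point needing care is applying the identity $a^2=a+1$ consistently, especially the reduction $\frac{1}{a^3(a-1)}=\frac{1}{a^2}$, which follows from $a^3(a-1)=a^2\cdot a(a-1)=a^2$. Alternatively, the case $m=2$ of property 11) gives the same conclusion, because $a=\frac{1+\sqrt5}{2}$ satisfies $1=\frac{a^2(a-1)}{a^2-1}$, which is equivalent to $a^2-1=a^2(a-1)$, i.e. $a^2-a-1=0$.
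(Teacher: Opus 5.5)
Your proof is correct and is essentially the paper's argument. The paper also identifies $\Delta^{r_a}_{0}\cap\Delta^{r_a}_{1}=\bigl[\frac1a;\frac{1}{a(a-1)}\bigr]$ from the endpoints $\Delta^{r_a}_{1(0)}$ and $\Delta^{r_a}_{0(1)}$, then uses $a=a^2-1$ to match this interval with $\Delta^{r_a}_{100}$ and $\Delta^{r_a}_{011}$; you simply carry out explicitly the endpoint and length computations, showing all three sets equal $[a^{-1};1]$, which the paper leaves to the reader.
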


\begin{proof}
Indeed, $\Delta^{r_a}_{0}\cap \Delta^{r_a}_{1} = \left[ \Delta^{r_a}_{1(0)}; \Delta^{r_a}_{0(1)} \right]  = \left[\frac{1}{a}; \frac{1}{a(a-1)}\right] =
\Delta^{r_a}_{100} = \Delta^{r_a}_{011}$, since 
$a=a^2-1$.
\end{proof}

Consider the set $C$ of all numbers $x\in\left[0;\frac1{a-1}\right]$ whose $r_a$--representation contains no block of fewer than $m$ consecutive zeros. That is, $x=\Delta^{r_a}_{\alpha_1\alpha_2\ldots\alpha_n\ldots}\in C $ if and only if $\alpha_k=0$ implies that $\alpha_{k+j}=0$ for every $j=\overline{1,\ldots,m-1}$.

\begin{theorem}
If $r=1$ and $m$ is the smallest positive integer satisfying the inequality
$a^{m-1}(a-1)>1$, then the set $C$ is a self-similar Cantor-type set of Lebesgue measure zero. Its Hausdorff--Besicovitch dimension is the unique solution of the equation
\[
\frac{1}{a^x}+\frac{1}{a^{mx}}=1.
\]
\end{theorem}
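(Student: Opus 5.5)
The plan is to realize $C$ as the attractor of a two-map iterated function system of similarities and then apply Moran's theorem. Read literally, the condition ``$\alpha_k=0$ implies $\alpha_{k+j}=0$ for $j=1,\ldots,m-1$'' forces every digit after the first zero to vanish. I will therefore use the reading that the theorem evidently intends: an admissible digit sequence is an infinite concatenation of the two blocks $1$ and $\underbrace{0\ldots0}_m$. Equivalently, zeros occur only in maximal runs whose lengths are multiples of $m$, and an infinite tail of zeros is allowed. Accordingly,
\[
C=\left\{\Delta^{r_a}_{\beta_1\beta_2\ldots}:\ \beta_i\in\{1,\ \underbrace{0\ldots0}_m\}\right\},
\]
where $\Delta^{r_a}_{\beta_1\beta_2\ldots}$ denotes the number obtained by concatenating the blocks $\beta_1,\beta_2,\ldots$. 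The relevant similarities are
\[
\varphi_1(x)=\frac{x}{a}+\frac1a,\qquad \varphi_0(x)=\frac{x}{a^m}.
\]
Shifting the digit sequence by one block shows that $C=\varphi_0(C)\cup\varphi_1(C)$. Since $C$ is the image of a compact sequence space under the continuous map $\gamma$, it is compact and nonempty. Hence $C$ is the unique attractor of $\{\varphi_0,\varphi_1\}$, and it is self-similar with ratios $a^{-m}$ and $a^{-1}$.

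The key step is the strong separation condition, which is where the hypothesis enters. Set $I=\left[0;\frac1{a-1}\right]$. By property 2) of cylinders, $\varphi_0(I)=\left[0;\frac{1}{a^m(a-1)}\right]$, which is exactly the cylinder $\Delta^{r_a}_{0\ldots0}$ of rank $m$, and $\varphi_1(I)=\Delta^{r_a}_1=\left[\frac1a;\frac1{a-1}\right]$. These intervals are disjoint if and only if $\frac{1}{a^m(a-1)}<\frac1a$, that is, $a^{m-1}(a-1)>1$.

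Iterating the system, the $n$-th stage consists of $2^n$ pairwise disjoint closed intervals. Their diameters tend to $0$, and each of them contains two disjoint intervals of the next stage. The usual argument then shows that $C$ is perfect and nowhere dense, so it is a Cantor-type set. The same disjointness makes the block coding a bijection onto $C$. This also settles a possible worry: points with several $r_a$-representations do not add anything extra, because $C$ is by definition the $\gamma$-image of the admissible sequences.

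For the Lebesgue measure, note that the total length at stage $n$ equals $\left(\frac1a+\frac1{a^m}\right)^n|I|$. The inequality $\frac1a+\frac1{a^m}<1$ is equivalent to $a^{m-1}+1<a^m$, which is again $a^{m-1}(a-1)>1$. Hence the total length tends to $0$ and $\lambda(C)=0$.

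For the dimension, apply Moran's theorem (the open set condition holds, even in its strong form). It gives $\dim_H C=s$, where $s$ solves $a^{-x}+a^{-mx}=1$. The function $x\mapsto a^{-x}+a^{-mx}$ is continuous and strictly decreasing, equals $2$ at $x=0$, and is less than $1$ at $x=1$ by the computation above. So $s$ exists, is unique, and satisfies $0<s<1$, which is consistent with measure zero.

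I expect the main obstacle to be the first step: pinning down the intended definition of $C$ and checking that it coincides with the attractor. The minimality of $m$ plays no role in the proof beyond guaranteeing the separation inequality; the argument works for every $m$ satisfying it.
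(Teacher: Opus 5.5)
Your proof is correct and follows essentially the same route as the paper: the separation inequality $\frac1a-\frac{1}{a^m(a-1)}>0$ splits $C$ into two disjoint similar copies (your $\varphi_0(C)\subset\Delta^{r_a}_{0\ldots0}$ and $\varphi_1(C)\subset\Delta^{r_a}_1$, with ratios $a^{-m}$ and $a^{-1}$), and the dimension then follows from the open set condition (Moran's theorem). The differences are minor: you get $\lambda(C)=0$ from the stage-$n$ total length $(a^{-1}+a^{-m})^n|I|\to 0$ instead of the paper's identity $\lambda(C)=(k_1+k_2)\lambda(C)$, and you make explicit the block reading of $C$ (concatenations of $1$ and $0\ldots0$), which is exactly what the paper's claim that $\Delta^{r_a}_{0\ldots0}\cap C$ is similar to $C$ tacitly requires.
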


\begin{proof}
Since
\begin{align*}
  \min \Delta^{r_a}_1-\max \Delta^{r_a}_{\underbrace{0\ldots0}_m}= & \Delta^{r_a}_{1(0)}
- \Delta^{r_a}_{\underbrace{0\ldots0}_m(1)}=\frac{1}{a} - \left(\frac{1}{a^2}+\cdots+\frac{1}{a^{m+1}}\right) \\
  = & \frac1a-\frac1{a^m(a-1)}>0
\end{align*}
it follows that
\[
C=[\Delta^{r_a}_{\underbrace{0\ldots0}_m}\cap C] \cup [\Delta^{r_a}_1\cap C].
\]
Moreover, $C$ is similar to $\Delta^{r_a}_{\underbrace{0\ldots0}_m}\cap C$
with similarity ratio $k_1=\frac{1}{a^m}$, and $C$ is similar to $\Delta^{r_a}_1\cap C$
with similarity ratio $k_2=\frac{1}{a}$.

Hence, the Lebesgue measure of the set $C$ satisfies
\[
\lambda(C)=k_1\lambda(C)+k_2\lambda(C)=(k_1+k_2)\lambda(C).
\]
Since $k_1+k_2\neq1$, we conclude that $\lambda(C)=0$.

The Cantor--type set $C$ is self--similar, and its similarity dimension is the unique solution of the equation
\[
\frac{1}{a^x}+\frac{1}{a^{mx}}=1.
\]
Since $C$ satisfies the open set condition \cite{triebel}, its similarity dimension coincides with its Hausdorff--Besicovitch dimension.
\end{proof}

\begin{lemma}\label{eq_a}
If $r=1$ and $\Delta^{r_a}_{0}\cap \Delta^{r_a}_{1} = \Delta^{r_a}_{0\underbrace{1\ldots 1}_m}=\Delta^{r_a}_{1\underbrace{0\ldots 0}_m}$,
where $m\geqslant 2$, then $a$ is the positive root of the equation
$a^{m+1}-2a^{m}+1=0$.
\end{lemma}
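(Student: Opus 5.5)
The plan is to compare endpoints of intervals, using the explicit description of cylinders (property 2) with $r=1$), exactly as in the proof of the golden-ratio lemma. By property 7), the overlap is
\[
\Delta^{r_a}_{0}\cap \Delta^{r_a}_{1}=\left[\Delta^{r_a}_{1(0)};\Delta^{r_a}_{0(1)}\right]=\left[\frac1a;\frac{1}{a(a-1)}\right].
\]
By property 2), the two cylinders of rank $m+1$ are
\[
\Delta^{r_a}_{0\underbrace{1\ldots1}_m}=\left[\sum_{i=2}^{m+1}\frac1{a^i};\ \sum_{i=2}^{m+1}\frac1{a^i}+\frac{1}{a^{m+1}(a-1)}\right],
\qquad
\Delta^{r_a}_{1\underbrace{0\ldots0}_m}=\left[\frac1a;\ \frac1a+\frac{1}{a^{m+1}(a-1)}\right].
\]
The right endpoint of the first one is $\Delta^{r_a}_{0(1)}=\frac{1}{a(a-1)}$, and the left endpoint of the second is $\Delta^{r_a}_{1(0)}=\frac1a$. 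So the overlap shares its right end with the first cylinder and its left end with the second.

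The hypothesis says all three intervals coincide. Each of the two equalities then gives one nontrivial endpoint condition:
\begin{itemize}
\item equality of the overlap with $\Delta^{r_a}_{1\underbrace{0\ldots0}_m}$ forces the right endpoints to agree, $\frac1a+\frac1{a^{m+1}(a-1)}=\frac1{a(a-1)}$;
\item equality with $\Delta^{r_a}_{0\underbrace{1\ldots1}_m}$ forces the left endpoints to agree, $\sum_{i=2}^{m+1}a^{-i}=\frac1a$.
\end{itemize}
These two conditions are equivalent, because both intervals have the same length $\frac1{a^{m+1}(a-1)}$. This is also exactly the condition in property 11) with $k=1$, $c=0$, $r=1$.

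It remains to simplify the first condition. Multiplying by $a^{m+1}(a-1)$ gives
\[
a^m(a-1)+1=a^m,
\]
that is,
\[
a^{m+1}-2a^m+1=0 .
\]
Equivalently, one can clear denominators in the geometric sum $\frac{1}{a^2}\cdot\frac{1-a^{-m}}{1-a^{-1}}=\frac1a$ and reach the same equation.

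Finally, $a$ is a positive root of this equation, and since $a>1$ it is the root exceeding $1$. The polynomial $a^{m+1}-2a^m+1$ factors as $(a-1)(a^m-a^{m-1}-\cdots-a-1)$. The factor $a=1$ is excluded, and the second factor has exactly one positive root by Descartes' rule of signs, so "the positive root" is well defined. The only step needing care is this last one, where the spurious root $a=1$ must be discarded.
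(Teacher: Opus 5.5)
Your proof is correct and follows essentially the same route as the paper, which simply specializes Property~11 to $r=1$ (with $k=1$, $c=0$) to get $1=\frac{a^m(a-1)}{a^m-1}$; you derive the same endpoint condition directly from Properties~2 and~7. Your extra observation is a worthwhile clarification that the paper omits: since $a^{m+1}-2a^m+1=(a-1)(a^m-a^{m-1}-\cdots-1)$ also vanishes at $a=1$, ``the positive root'' must be read as the unique root exceeding $1$.
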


\begin{proof}
For $r=1$, Property~11 yields $1=\frac{a^m(a-1)}{a^m-1}$, which is equivalent to
$a^{m+1}-2a^{m}+1=0$.
\end{proof}

\begin{corollary}
If $r=1$ and $a=\frac{1+\sqrt5}{2}$, then the blocks of three consecutive digits $100$ and $011$ are interchangeable (equivalent) in the $r_a$--representations of numbers from the interval $\left[0;\frac{r}{a-1}\right]$.
\end{corollary}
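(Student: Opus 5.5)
The plan is to reduce the corollary to the identity $\Delta^{r_a}_{100}=\Delta^{r_a}_{011}$ (as real numbers, not as cylinders) and then propagate it to arbitrary positions and tails by linearity of the $r_a$--representation. The claim to prove is this: whenever a block $100$ occupies positions $n+1,n+2,n+3$ of an $r_a$--representation of $x$, replacing it by $011$ (and conversely) yields another $r_a$--representation of the same $x$.

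First, from $a=\frac{1+\sqrt5}{2}$ we have $a^2=a+1$. Hence
\[
\frac{0}{a}+\frac{1}{a^2}+\frac{1}{a^3}=\frac{a+1}{a^3}=\frac{a^2}{a^3}=\frac1a=\frac1a+\frac0{a^2}+\frac0{a^3}.
\]
This is exactly the case $m=2$ of Property~11, and it matches Lemma~\ref{eq_a} with $m=2$, since $a^3-2a^2+1=(a-1)(a^2-a-1)=0$. It is also the content of the preceding lemma, which gives $\Delta^{r_a}_{011}=\Delta^{r_a}_{100}$ as cylinders of rank $3$. By Property~2 (with $r=1$, rank $3$), equality of these cylinders is equivalent to equality of their left endpoints $\sum_{i=1}^3 c_i a^{-i}$. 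This is also the criterion of Property~6.

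Second, take any sequence $(\alpha_n)$ with $\alpha_{n+1}\alpha_{n+2}\alpha_{n+3}=100$, and let $(\beta_n)$ be obtained by replacing this block with $011$. Then
\[
\Delta^{r_a}_{\alpha_1\alpha_2\ldots}-\Delta^{r_a}_{\beta_1\beta_2\ldots}=\frac1{a^n}\left(\frac1a-\frac1{a^2}-\frac1{a^3}\right)=0 .
\]
So both sequences represent the same $x\in\left[0;\frac1{a-1}\right]$. The same holds for any finite number of substitutions, because each one preserves the value of the series. Equivalently, for every prefix $c_1\ldots c_n$ we get $\Delta^{r_a}_{c_1\ldots c_n100}=\Delta^{r_a}_{c_1\ldots c_n011}$ as cylinders, via the similarity $x\mapsto \sum_{i\le n}c_ia^{-i}+a^{-n}x$.

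I expect no real obstacle here. The only point needing care is to state precisely what ``interchangeable'' means: equality of values for every prefix and every tail, which is the same as equality of the corresponding cylinders of every rank $n+3$. Both readings follow from the one-line identity above.
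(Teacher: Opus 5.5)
Your proof is correct and follows the paper's route. The paper gives no separate proof of the corollary; it is read off from the preceding lemma, whose proof uses $a=a^2-1$ to identify $\Delta^{r_a}_{0}\cap\Delta^{r_a}_{1}=\left[\frac1a;\frac1{a(a-1)}\right]$ with $\Delta^{r_a}_{100}=\Delta^{r_a}_{011}$, which rests on the same identity $\frac1a=\frac1{a^2}+\frac1{a^3}$ that you verify directly. Your transfer to an arbitrary position via the similarity $x\mapsto\sum_{i\le n}c_ia^{-i}+a^{-n}x$ makes precise what the paper leaves implicit and later uses in the proof of the next theorem.
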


\begin{theorem}
If $r=1$ and $a=\frac{1+\sqrt5}{2}$, then every number $x\in\left(0;\frac{r}{a-1}\right)$
has infinitely many distinct $r_a$--representations. Moreover, if one of the $r_a$--representations of $x$ contains infinitely many zeros and ones does not contain the period $(01)$, then $x$ has continuum many distinct $r_a$--representations. 
\end{theorem}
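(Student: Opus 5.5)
The plan rests on the local substitution of the Corollary. When $a=\frac{1+\sqrt5}{2}$ we have $a^2=a+1$, so $\frac1{a^2}+\frac1{a^3}=\frac1a$. Hence in any $r_a$--representation a block $100$ occupying positions $n,n+1,n+2$ can be replaced by $011$, and conversely, without changing the value of $x$. The same identity $\frac1{a^{k}}=\frac1{a^{k+1}}+\frac1{a^{k+2}}$ lets us replace a pair $10$ by $01$ followed by a carry into the next position. I will use these moves to manufacture new representations from a given one.

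\textbf{Infinitely many representations.} Let $x\in\left(0;\frac1{a-1}\right)$ and fix any representation $(\alpha_n)$. Since $x>0$, some digit equals $1$. Since $x<\frac1{a-1}=\Delta^{r_a}_{(1)}$, some digit equals $0$.

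I will show that $x$ has a representation containing a block $100$ as far to the right as we like. The tool is the identity $\Delta^{r_a}_{1(0)}=\Delta^{r_a}_{0(1)}\cdot$\,? Checking it: $\Delta^{r_a}_{0(1)}=\frac{1}{a(a-1)}=\frac{a}{a}\cdot\frac1{a(a-1)}$, and $\frac1{a-1}=a$, so $\Delta^{r_a}_{0(1)}=1=\frac{a}{a}$. This is not equal to $\frac1a$, so that identity fails.

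Instead I will use the finite version: $1\,0\,0\ldots$ can be rewritten as $0\,1\,1$ at the same place, and then the trailing $1$ of $011$ followed by $00$ can be rewritten again as $011$. So a representation with tail $1000\ldots$ at some position yields infinitely many representations, obtained by pushing the $100\to011$ moves to the right. Concretely, $10^{2j}\ldots$ becomes $(01)^j1\ldots$ style blocks.

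In general the argument splits into cases on the digit pattern of $(\alpha_n)$:
\begin{itemize}
\item[(a)] It contains $100$ infinitely often. Applying the move at each such occurrence independently (after thinning them to be disjoint) gives infinitely many representations.
\item[(b)] It contains $011$ infinitely often. This is symmetric to (a).
\item[(c)] Otherwise, eventually neither $100$ nor $011$ occurs. Then the tail has no $00$ following a $1$ and no $11$ following a $0$, which forces the tail to be $(01)$, $(0)$ or $(1)$.
\end{itemize}

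The tails $(0)$ and $(1)$ contain $10\ldots0$ or $01\ldots1$ at the boundary, since $x$ is neither endpoint; repeated moves there produce infinitely many distinct representations.

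The tail $(01)$ is the case I expect to be the main obstacle. The plan there is to use $\Delta^{r_a}_{(01)}$ as follows. Since $\frac{1}{a^2}\cdot\frac{1}{1-a^{-2}}=\frac1{a^2-1}=\frac1a$, we get $\Delta^{r_a}_{(01)}=\Delta^{r_a}_{1(0)}$. This converts the tail to $1(0)$, which falls under the previous subcase, and so again yields infinitely many representations.

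\textbf{Continuum many representations.} Here I will assume the given representation has infinitely many zeros and ones and no period $(01)$. The plan is to show that, possibly after finitely many preliminary moves, there are infinitely many pairwise disjoint positions, each carrying a block $100$ or $011$. A tail with no occurrence of $100$ or $011$ is eventually periodic $(0)$, $(1)$ or $(01)$, and each of these is excluded by the hypothesis. So we can choose disjoint occurrences $B_1,B_2,\ldots$ greedily, each starting after the previous one ends.

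For each subset $S\subseteq\mathbb N$, replace $B_j$ by its partner block exactly for $j\in S$. Because the blocks are disjoint, the value is unchanged, and distinct $S$ give distinct digit sequences. This yields $2^{\aleph_0}$ representations.
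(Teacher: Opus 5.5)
Your proof is correct and follows the paper's argument. The swap $100\leftrightarrow 011$, applied independently on infinitely many disjoint blocks, gives continuum many representations; the push-right moves on the tails $1(0)$ and $0(1)$ give countably many; and the identity $\Delta^{r_a}_{(01)}=\Delta^{r_a}_{1(0)}$ reduces the $(01)$ tail to that case. Your observation that a tail avoiding both $100$ and $011$ must be eventually $(0)$, $(1)$ or $(01)$ makes explicit the case exhaustion that the paper leaves implicit. You should delete the aborted scratch computation with $\Delta^{r_a}_{0(1)}$.
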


\begin{proof}
1. A number whose representation has the simple period $(i)$ has at least countably many distinct $r_a$-representations. Indeed,
\begin{align*}
  \Delta^{r_a}_{c_1\ldots c_{m-1}1(0)} = & \Delta^{r_a}_{c_1\ldots c_{m-1}011(0)} = \Delta^{r_a}_{c_1\ldots c_{m-1}01011(0)} = \ldots = \Delta^{r_a}_{c_1\ldots c_{m-1}\underbrace{0101\ldots01}_n1(0)}  \\
  = & \Delta^{r_a}_{c_1\ldots c_{m-1}(01)}; \\
  \Delta^{r_a}_{c_1\ldots c_{m-1}0(1)} = & \Delta^{r_a}_{c_1\ldots c_{m-1}100(1)}
=
\Delta^{r_a}_{c_1\ldots c_{m-1}10100(1)}
=
\ldots
=
\Delta^{r_a}_{c_1\ldots c_{m-1}\underbrace{1010\ldots10}_n0(1)}
 \\
  = & \Delta^{r_a}_{c_1\ldots c_{m-1}(10)}.
\end{align*}

2. By the previous corollary, the numbers
$x_1=\Delta^{r_a}_{(100)}$ and $x_2=\Delta^{r_a}_{(011)}$ have continuum many distinct representations, since each of their representations contains infinitely many positions at which two alternative blocks may be substituted. For the same reason, any number whose $r_a$--representation contains the block $100$ infinitely many times also has continuum many distinct representations. This occurs whenever the representation contains infinitely many zeros and ones and infinitely many blocks of consecutive zeros of length greater than one. An analogous conclusion holds for the block $011$.

3. It remains to consider the number $x_0=\Delta^{r_a}_{(01)}$. Since $\Delta^{r_a}_{(01)} = \frac{1}{a^2-1} = \frac{1}{a} = \Delta^{r_a}_{1(0)}$,  it follows from Part~1 that $x_0$ has at least countably many distinct representations. The same conclusion holds for every number of the form $x=\Delta^{r_a}_{c_1c_2\ldots c_{m-1}(01)}$.
\end{proof}

\section{A function with fractal level sets}

Consider the function $\varphi$ defined by
\begin{equation}\label{phi_func}
  \varphi(\Delta^2_{\alpha_1\alpha_2\ldots\alpha_n\ldots})=\Delta^{r_a}_{\alpha_1\alpha_2\ldots\alpha_n\ldots}.
\end{equation}

In paper \cite{VasVovkPrats}, it was proved that the function $\varphi$ is continuous at every $2$--adic--unary point and discontinuous at every $2$--adic--binary point. Moreover, the sum of its jump discontinuities is infinite. The function is nowhere monotone and has unbounded variation.

We note that the function $\varphi$ is a particular case of the function
\[
b_{p,q}(x):\;
x=\sum\limits_{i=1}^{\infty}\frac{\alpha_i}{p^i}
\mapsto
\sum\limits_{i=1}^{\infty}\frac{\alpha_i}{q^i}=y,
\]
where $p>1$ and $q>1$, which was called a \emph{Cantor-type function} by the authors of \cite{komornik_cant}. This terminology is nonstandard, since the term \emph{Cantor-type function} usually refers to a continuous nondecreasing singular function whose set of points of increase has Lebesgue measure zero.

We also note that the definition of the function $b_{p,q}(x)$ is well posed because the argument admits a unique representation, obtained by the greedy expansion (greedy algorithm) \cite{1}.

In this paper, we establish several additional properties of the function $\varphi$.

\begin{theorem}
The function $\varphi$ is a solution of the system of functional equations
\begin{equation}
  \begin{cases}\label{system}
  \varphi\!\left(\frac{x}{2}\right)=\frac{1}{a}\varphi(x),\\
  \varphi\!\left(\frac{1}{2}+\frac{x}{2}\right)=\frac{1}{a}+\frac{1}{a}\varphi(x).
\end{cases}
\end{equation}
\end{theorem}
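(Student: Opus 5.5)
The plan is to verify both equations directly from the definition \eqref{phi_func}, working at the level of digit sequences. The only point that needs care is the convention that the binary representation of the argument does not end with the period $(1)$. We take $x\in[0;1)$, so that $x$ has a unique such representation $x=\Delta^2_{\alpha_1\alpha_2\ldots\alpha_n\ldots}$. The point $x=1$, if it belongs to the domain at all, is covered by the paper's convention and can be handled separately or excluded.

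\emph{Step 1: the digit shift.} For $x=\Delta^2_{\alpha_1\alpha_2\ldots}$ with no terminal $(1)$, we have
\[
\frac{x}{2}=\sum_{n=1}^{\infty}\frac{\alpha_n}{2^{n+1}}=\Delta^2_{0\alpha_1\alpha_2\ldots},\qquad
\frac12+\frac{x}{2}=\Delta^2_{1\alpha_1\alpha_2\ldots}.
\]
Prepending a digit does not create a terminal period $(1)$. Hence these are exactly the admissible representations of $\frac{x}{2}$ and $\frac12+\frac{x}{2}$, and it is legitimate to evaluate $\varphi$ on them.

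\emph{Step 2: apply $\varphi$ and reindex.} Applying $\varphi$ gives
\[
\varphi\!\left(\frac{x}{2}\right)=\Delta^{r_a}_{0\alpha_1\alpha_2\ldots}=\sum_{n=1}^{\infty}\frac{\alpha_n}{a^{n+1}}=\frac1a\varphi(x),
\]
and similarly
\[
\varphi\!\left(\frac12+\frac{x}{2}\right)=\frac1a+\sum_{n=1}^{\infty}\frac{\alpha_n}{a^{n+1}}=\frac1a+\frac1a\varphi(x).
\]
All series involved converge absolutely because $a>1$, so the reindexing is justified.

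\emph{Main obstacle.} The only delicate step is the well-definedness check in Step 1. Because $\varphi$ is discontinuous at binary-rational points, the two binary representations of such points give different values of the series. One therefore has to confirm that the shift maps $x\mapsto x/2$ and $x\mapsto \frac12+\frac x2$ send the admissible representation to the admissible representation. This holds because the tail of the digit sequence is unchanged by prepending a digit.
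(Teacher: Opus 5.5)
Your proof is correct for $x\in[0,1)$, but it runs in the opposite direction from the paper's.

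\textbf{Your route.} You verify the equations directly. Prepending a digit to the admissible binary expansion of $x$ gives the admissible expansion of $\frac{x}{2}$ or $\frac{1}{2}+\frac{x}{2}$, and re-indexing the series yields both identities.

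\textbf{The paper's route.} The paper writes down the same digit-shift identities and then argues the converse. It takes a bounded function on the unit interval satisfying the system and iterates the equations $k$ times to get $\varphi(x)=\sum_{n=1}^{k}\alpha_n a^{-n}+a^{-k}\varphi(\Delta^2_{\alpha_{k+1}\alpha_{k+2}\ldots})$. It then lets $k\to\infty$, and the remainder vanishes by boundedness.

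\textbf{What each buys.} The paper's argument gives uniqueness: the system characterizes $\varphi$ among bounded functions, which is the natural way to describe $\varphi$ through its self-affine structure. It does not spell out that $\varphi$ actually satisfies the system, which is the literal content of the statement. That is exactly your computation, including the admissibility check the paper leaves tacit. The check matters because $\varphi$ takes different values on the two expansions of a dyadic rational when $a\neq 2$.

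\textbf{One correction about $x=1$.} This point cannot be ``handled separately'', and it is not ``covered by the convention'': under the convention it has no admissible expansion. Moreover, no value of $\varphi(1)$ satisfies both equations at $x=1$ unless $a=2$. The first equation, together with $\varphi(\frac{1}{2})=\Delta^{r_a}_{1(0)}=\frac{1}{a}$, forces $\varphi(1)=1$. The second forces $\varphi(1)=\frac{1}{a-1}$. So the system holds precisely on $[0,1)$, the domain you actually used, and $1$ must be excluded; the paper's phrase ``defined on the interval $[0,1]$'' should be read the same way.
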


\begin{proof}
We show that, in the class of bounded functions defined on the interval $[0,1]$, the system of functional equations~(\ref{system}) has the function $\varphi$, defined by~(\ref{phi_func}), as its solution.

If a number $t$ is represented by its classical binary expansion,
\[
t=\frac{a_1}{2}+\frac{a_2}{2^2}+\cdots+\frac{a_n}{2^n}+\cdots
=\Delta^{2}_{a_1a_2\ldots a_n\ldots},
\qquad
a_n\in\{0,1\},
\]
then
\[
\frac{t}{2}
=
\Delta^{2}_{0a_1a_2\ldots a_n\ldots},
\qquad
\frac{1+t}{2}
=
\Delta^{2}_{1a_1a_2\ldots a_n\ldots}.
\]

Since the function $\varphi$ is defined at every point of the interval $[0,1]$ and takes only nonnegative values, it follows from~\eqref{system} that
\begin{align*}
  \varphi(x=\Delta^{2}_{\alpha_1\alpha_2\ldots\alpha_n\ldots})
  &=
  \frac{\alpha_1}{a}
  +\frac{1}{a}\varphi(\Delta^{2}_{\alpha_2\alpha_3\ldots\alpha_n\ldots})\\
  &=
  \frac{\alpha_1}{a}
  +\frac{1}{a}
  \left(
  \frac{\alpha_2}{a}
  +\frac{1}{a}\varphi(\Delta^{2}_{\alpha_3\alpha_4\ldots\alpha_n\ldots})
  \right)
  =\cdots\\
  &=
  \frac{\alpha_1}{a}
  +\frac{\alpha_2}{a^2}
  +\cdots
  +\frac{\alpha_k}{a^k}
  +\frac{1}{a^k}\varphi(\Delta^{2}_{\alpha_k\alpha_{k+1}\ldots}).
\end{align*}

Since $\varphi$ is bounded and defined at every point
$x=\Delta^{2}_{\alpha_1\alpha_2\ldots\alpha_k\ldots}$, in particular, at every point of the sequence 
\[
x_k=\Delta^{2}_{\alpha_k\alpha_{k+1}\ldots},
\quad 
k\in N,
\]
we have
\[
\frac{1}{a^k}
\varphi(\Delta^{2}_{\alpha_k\alpha_{k+1}\ldots})
\longrightarrow 0
\quad \text{as} \quad
k\to\infty.
\]

Therefore, the above expansion $\varphi(x=\Delta^{2}_{\alpha_1\alpha_2\ldots\alpha_n\ldots})$ converges, and hence
\[
\varphi(x=\Delta^{2}_{\alpha_1\alpha_2\ldots\alpha_n\ldots})
=
\sum\limits_{n=1}^{\infty}\frac{\alpha_n}{a^n}
=
\Delta^{r_a}_{\alpha_1\alpha_2\ldots\alpha_n\ldots}.
\qedhere
\]
\end{proof}

\begin{lemma}
The graph $\Gamma_{\varphi}$ of the function $\varphi$ is a self--affine subset of $R^2$ whose self--affine dimension is $2\log_{2a}2$.
\end{lemma}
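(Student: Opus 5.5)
The plan is to read off the self-affine structure directly from the system of functional equations~\eqref{system} in the preceding theorem. Consider the two affine maps of the plane
\[
\psi_0:\begin{cases} x'=\frac{x}{2},\\ y'=\frac{y}{a},\end{cases}
\qquad
\psi_1:\begin{cases} x'=\frac{1}{2}+\frac{x}{2},\\ y'=\frac{1}{a}+\frac{y}{a}.\end{cases}
\]
These are nondegenerate, diagonal and contracting, since $a>1$. First we show that
\[
\Gamma_\varphi=\psi_0(\Gamma_\varphi)\cup\psi_1(\Gamma_\varphi).
\]
For the inclusion $\supseteq$, take a point $(x,\varphi(x))\in\Gamma_\varphi$. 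Its image under $\psi_i$ is
\[
\left(\tfrac{i+x}{2},\ \tfrac{i}{a}+\tfrac{1}{a}\varphi(x)\right).
\]
By~\eqref{system} this equals $\left(\tfrac{i+x}{2},\varphi\!\left(\tfrac{i+x}{2}\right)\right)$, which lies on $\Gamma_\varphi$. For the inclusion $\subseteq$, take any $t\in[0,1)$ and write $t=\Delta^2_{\alpha_1\alpha_2\ldots}$ in its binary expansion without the period $(1)$. Then
\[
t=\frac{\alpha_1+t'}{2},\qquad t'=\Delta^2_{\alpha_2\alpha_3\ldots}\in[0,1),
\]
so $(t,\varphi(t))=\psi_{\alpha_1}(t',\varphi(t'))$. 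The endpoint $t=1$, if it belongs to the domain, is handled by $\psi_1$ with $t'=1$. This shows that $\Gamma_\varphi$ is a fixed set of the system $\{\psi_0,\psi_1\}$, and hence it is self-affine.

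Next we compute the self-affine dimension. I would use the paper's convention, as in the earlier theorem on $C$, that the dimension of such a set is the solution $x$ of the Moran-type equation
\[
\sum_i (\text{coefficient of } \psi_i)^x=1,
\]
where the coefficient of a diagonal affine map is taken to be the geometric mean of its two contraction ratios. Here both maps have ratios $\frac12$ and $\frac1a$, so the coefficient is $(2a)^{-1/2}$. The equation becomes
\[
2\,(2a)^{-x/2}=1,
\]
which gives $x=2\log_{2a}2$, as claimed.

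The main obstacle is that $\Gamma_\varphi$ is not closed, since $\varphi$ is discontinuous at the binary-rational points. Consequently $\Gamma_\varphi$ is not the compact attractor of $\{\psi_0,\psi_1\}$; that attractor is the closure of $\Gamma_\varphi$. I would therefore state the first step as an invariance identity for the set itself, which was verified above without any topology. I would then note that the affine-dimension value depends only on the coefficients of the defining system of maps. Care is also needed to keep the uniqueness of the binary representation, since the paper excludes the period $(1)$, so that the decomposition into the two pieces $\psi_0(\Gamma_\varphi)$ and $\psi_1(\Gamma_\varphi)$ is disjoint. Under the ``determinant'' reading of the self-affine dimension the same equation results, because the coefficient $(2a)^{-1/2}$ is exactly $|\det\psi_i|^{1/2}$.
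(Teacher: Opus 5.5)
Your proposal is correct and is essentially the paper's proof: you use the same two diagonal maps read off from the functional equations~\eqref{system}, the same decomposition $\Gamma_\varphi=\psi_0(\Gamma_\varphi)\cup\psi_1(\Gamma_\varphi)$, and the same equation $2\,(2a)^{-x/2}=1$, obtained from $|\det\psi_i|^{1/2}=(2a)^{-1/2}$.

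Your remarks on non-closedness are a sensible refinement that the paper omits. However, the point $t=1$ must actually be excluded from the domain, as the no-period-$(1)$ convention does, rather than handled by $\psi_1$. If $\varphi(1)=\frac{1}{a-1}$, then $\psi_0(1,\varphi(1))=\bigl(\tfrac12,\tfrac{1}{a(a-1)}\bigr)\notin\Gamma_\varphi$ for $a\neq2$, because $\varphi(\tfrac12)=\tfrac1a$.
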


\begin{proof}
Clearly, $\Gamma_{\varphi}=\Gamma_{0}\cup\Gamma_{1}$, where $\Gamma_{i}=\{M(x,y): x\in\Delta^2_{i},\; y=\varphi(x)\}$, $i\in\{0,1\}$, and $\Gamma_i=\gamma_i(\Gamma_{\varphi})$, where $\gamma_i$ is the self--affine transformation defined by
\[
\gamma_i:
\begin{cases}
x'=\dfrac{x}{2}+\dfrac{i}{2}
=\Delta^{2}_{i\alpha_1(x)\alpha_2(x)\ldots\alpha_n(x)\ldots},\\[2mm]
y'=\dfrac{\varphi(x)}{a}+\dfrac{i}{a}
=\Delta^{r_a}_{i\alpha_1(x)\alpha_2(x)\ldots\alpha_n(x)\ldots}.
\end{cases}
\]

Therefore, the self--affine dimension of the graph is the solution of the equation
\[
2\cdot
\left|
\begin{array}{cc}
\dfrac{1}{2} & 0\\
0 & \dfrac{1}{a}
\end{array}
\right|^{\frac{x}{2}}
=1,
\]
that is,
\[
x=2\log_{2a}2.
\]
\end{proof}

\begin{theorem}
The function $\varphi$ has fractal level sets. In particular, if $a$ satisfies the equation $a^{m+1}-2a^m+1=0$, then the level set corresponding to $\Delta^{r_a}_{0\underbrace{1\ldots1}_m}$
is fractal and has Hausdorff--Besicovitch dimension at least $\frac{1}{m+1}$.
\end{theorem}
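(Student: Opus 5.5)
Let $y_0=\Delta^{r_a}_{0\underbrace{1\ldots1}_m(0)}$ and let $\varphi^{-1}(y_0)$ be its level set. (For a cylinder value in the statement I read the number $\Delta^{r_a}_{0\underbrace{1\ldots1}_m(0)}=\Delta^{r_a}_{1(0)}=\frac1a$.) The plan is to exhibit a self-similar Cantor set inside this level set and read off its dimension from the similarity equation, as in the proof of the theorem on the set $C$.

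By Lemma~\ref{eq_a}, the hypothesis $a^{m+1}-2a^m+1=0$ is exactly Property~11 with $r=1$. For such $a$ we therefore have the block identity
\[
\Delta^{r_a}_{c_1\ldots c_{k-1}0\underbrace{1\ldots1}_m}=\Delta^{r_a}_{c_1\ldots c_{k-1}1\underbrace{0\ldots0}_m},
\]
that is,
\[
\frac{0}{a^k}+\frac{1}{a^{k+1}}+\cdots+\frac{1}{a^{k+m}}=\frac{1}{a^k}.
\]
So in any $r_a$-representation one may replace a block $B_0=0\underbrace{1\ldots1}_m$ by $B_1=1\underbrace{0\ldots0}_m$, each of length $m+1$, without changing the value. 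This is the analogue of the Corollary for the golden ratio, which is the case $m=2$.

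Now fix the base representation of $y_0$ as $B_0$ followed by $(0)$, or equivalently $B_1$ followed by $(0)$. Consider all binary sequences of the form
\[
B_{i_1}B_{i_2}B_{i_3}\ldots
\]
that start with $B_0$ or $B_1$. Each such sequence represents $y_0$ only if the tail sums correctly, so the construction has to be arranged more carefully: use sequences
\[
\omega=B_{i_1}\,0^{m+1}\,\ldots
\]
in which the free choice at each block position contributes zero net change. The cleanest device is to use the "telescoping" form: the infinite word $B_0B_0B_0\ldots$ differs from $B_1B_1\ldots$ after reorganizing. Concretely, I will check that $\Delta^{r_a}_{(0\underbrace{1\ldots1}_m)}$ has, by the identity above, continuum many representations obtained by independently choosing, at each block position $j$, between $B_0$ and $B_1$ whenever the following block allows the carry to be absorbed. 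I will pick as target level the value
\[
y^*=\Delta^{r_a}_{(0\underbrace{1\ldots1}_m)}.
\]
If that is unavoidable, I will note that level sets of $\varphi$ at $y_0$ and at $y^*$ behave alike by the self-affinity of $\Gamma_\varphi$.

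The preimage under $\varphi$ of the set of these representations is the set $E$ of binary points
\[
x=\Delta^2_{B_{i_1}B_{i_2}\ldots},\qquad i_j\in\{0,1\}.
\]
The set $E$ lies inside the level set. Dyadic-binary ambiguity affects only countably many points and can be ignored. $E$ is the attractor of the two maps
\[
x\mapsto 2^{-(m+1)}x+\Delta^2_{B_i(0)},\qquad i=0,1,
\]
with disjoint images, since $B_0\neq B_1$ as dyadic cylinders of rank $m+1$. Hence, by the open set condition as in the Cantor-set theorem above, its dimension $s$ solves
\[
2\cdot 2^{-(m+1)s}=1,
\]
which gives
\[
\dim_H E=\frac{1}{m+1}.
\]
Monotonicity of Hausdorff dimension then yields the lower bound $\frac1{m+1}$ for the level set, and positivity of the dimension makes the level set uncountable and fractal. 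Applying the same construction inside every dyadic cylinder, via the functional equations~\eqref{system}, gives fractal level sets in general.

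The main obstacle is the middle step: ensuring that free independent substitutions $B_0\leftrightarrow B_1$ at every block position really keep the value fixed, so that $E$ is contained in a single level set. A single substitution preserves the value, but a word that mixes blocks freely changes the sum unless every block is paired with its swap partner. I would first try the interleaved pattern
\[
\ldots B_{i_j}\,0^{m+1}\ldots
\]
with each choice taken relative to a fixed reference block, which keeps the value invariant. If the spacing costs a factor $2$, I will accept the weaker rate coming from period $2(m+1)$ and then recover $\frac1{m+1}$ by using overlapping substitutions.
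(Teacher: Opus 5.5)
\textbf{Gap: you never establish that $E$ lies in the level set.} Your set $E$ and your dimension computation are exactly the paper's. The paper also takes the binary points $\Delta^2_{\bar a_1\bar a_2\ldots}$ with $\bar a_n\in\{B_0,B_1\}$, notes that they all lie on the level $y^*=\Delta^{r_a}_{(0\underbrace{1\ldots1}_m)}$, and compares with the two-digit Cantor set in base $2^{m+1}$.

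The gap is the step the proof rests on, $E\subset\varphi^{-1}(y^*)$. Your final paragraph even claims that a word mixing $B_0$ and $B_1$ freely changes the sum. That claim is wrong, and the missing observation is one line. The blocks occupy the fixed positions $(j-1)(m+1)+1,\ldots,j(m+1)$, and the value is a sum of independent digit contributions, so
\[
\varphi\bigl(\Delta^2_{B_{i_1}B_{i_2}\ldots}\bigr)=\sum_{j\ge 1}a^{-(j-1)(m+1)}\,v(B_{i_j}),\qquad v(b_1\ldots b_{m+1})=\sum_{k=1}^{m+1}\frac{b_k}{a^k}.
\]
The hypothesis $a^{m+1}-2a^m+1=0$ is precisely $v(B_0)=v(B_1)$. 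This is your own displayed identity, and you may apply it at all block positions $k=1,\,m+2,\,2m+3,\ldots$ simultaneously. Hence each term is independent of $i_j$, there is no carry for the tail to absorb, and every point of $E$ has $\varphi$-value $v(B_0)\sum_{j\ge1}a^{-(j-1)(m+1)}=y^*$. Moreover both blocks contain a $0$, so these digit sequences never end in $(1)$ and are exactly the ones used to define $\varphi$. The countable exceptional set you planned to discard is therefore empty.

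\textbf{Your fallbacks do not prove the statement.} The interleaved words $B_{i_1}0^{m+1}B_{i_2}0^{m+1}\ldots$ lie on a different level, $\Delta^{r_a}_{(0\underbrace{1\ldots1}_m\underbrace{0\ldots0}_{m+1})}$, and give only dimension $\frac{1}{2(m+1)}$. ``Recovering $\frac1{m+1}$ by overlapping substitutions'' is not an argument.

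The remark that the levels $1/a$ and $y^*$ ``behave alike by self-affinity'' is false. The functional equations express $\varphi^{-1}(y)$ only through $\varphi^{-1}(ay)$ and $\varphi^{-1}(ay-1)$. For $m=2$ (the golden ratio) the only $r_a$--representations of $1/a$ not ending in $(1)$ are $1(0),\,011(0),\,01011(0),\ldots,(01)$, so $\varphi^{-1}(1/a)$ is countable and not fractal. The statement must therefore be read at the level $y^*$, as the paper does. With the block-linearity observation replacing your last paragraph, your argument is complete and coincides with the paper's.
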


\begin{proof}
By Lemma~\ref{eq_a}, the condition $a^{m+1}-2a^m+1=0$ implies
$\Delta^{r_a}_{0\underbrace{1\ldots1}_m}=\Delta^{r_a}_{1\underbrace{0\ldots0}_m}$.
The preimages of these cylinder sets are the binary cylinders $\Delta^{2}_{0\underbrace{1\ldots1}_m}$ and $\Delta^{2}_{1\underbrace{0\ldots0}_m}$,
which are disjoint. Each of these cylinders has length
$\frac{1}{2^{m+1}}$.

Every point $x=\Delta^2_{\overline{a}_1\overline{a}_2\ldots\overline{a}_n\ldots}$,
$\overline{a}_n\in \left\{0\underbrace{1\ldots1}_m,\;1\underbrace{0\ldots0}_m\right\}$,
is a preimage of the point $y_0=\Delta^{r_a}_{(0\underbrace{1\ldots1}_m)}$.
The set of all such numbers is, in general, a subset of the level set corresponding to $y_0$. Hence, the Hausdorff--Besicovitch dimension of the level set
$\varphi^{-1}(y_0)$ is at least the Hausdorff--Besicovitch dimension of the set of numbers $x\in[0,1]$ whose representations in the numeration system with base $2^{m+1}$ use only two digits. It is well known that the latter dimension is the solution of the equation $2\cdot2^{-(m+1)x}=1$, that is, $x=\frac{1}{m+1}$.
\end{proof}

\begin{theorem}
The following equality holds: $\int\limits_{0}^{1}\varphi(x)\,dx=\frac{1}{2(a-1)}$.
\end{theorem}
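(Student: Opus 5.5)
Since $\varphi$ is bounded (its values lie in $\left[0;\frac{1}{a-1}\right]$) and its discontinuities form the countable set of $2$--adic--binary points, $\varphi$ is Riemann (hence Lebesgue) integrable on $[0,1]$. Set $I=\int_0^1\varphi(x)\,dx$. I will compute $I$ in two ways and check that they agree.

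\textbf{First approach: the functional equations.} I split $[0,1]=\left[0,\frac12\right]\cup\left[\frac12,1\right]$. On each half I apply the corresponding equation of system~(\ref{system}), together with the substitutions $x=\frac{t}{2}$ and $x=\frac12+\frac{t}{2}$. This gives
\[
\int_0^{1/2}\varphi(x)\,dx=\frac12\int_0^1\varphi\!\left(\frac t2\right)dt=\frac{1}{2a}I,
\]
\[
\int_{1/2}^1\varphi(x)\,dx=\frac12\int_0^1\left(\frac1a+\frac1a\varphi(t)\right)dt=\frac{1}{2a}+\frac{1}{2a}I.
\]
Adding, $I=\frac{1}{2a}+\frac{I}{a}$, so $I\left(1-\frac1a\right)=\frac{1}{2a}$ and $I=\frac{1}{2(a-1)}$. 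Since $a>1$, the coefficient $1-\frac1a$ is nonzero, so this linear equation determines $I$ uniquely.

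\textbf{Second approach: term-by-term integration.} The digit functions $\alpha_n(x)$ of the binary expansion are measurable and satisfy $\int_0^1\alpha_n\,dx=\frac12$, because $\alpha_n=1$ on exactly half of the rank-$n$ binary cylinders. The series $\sum_n\alpha_n(x)a^{-n}$ has nonnegative terms, so monotone convergence lets me integrate term by term:
\[
I=\sum_{n\ge1}\frac{1}{2a^n}=\frac{1}{2(a-1)}.
\]

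The only delicate point is justifying integrability and the change of variables. The convention excluding expansions with period $(1)$ affects only countably many points, a set of measure zero, so it does not change the integral. With that settled, the computation is routine, and I would present the functional-equation argument as the main proof.
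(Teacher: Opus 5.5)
Your main argument is the paper's proof: split $[0,1]$ at $\frac12$, apply the two functional equations with the affine substitutions to get $I=\frac1a I+\frac{1}{2a}$, and solve for $I$. Your added justification of integrability and your term-by-term check $\sum_{n\ge1}\frac{1}{2a^n}=\frac{1}{2(a-1)}$ are both correct, and they make explicit the existence of the integral, which the paper takes for granted.
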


\begin{proof}
Using the additivity of the integral together with the functional equations~(\ref{system}) satisfied by the function $\varphi$, we obtain
\begin{align*}
\int\limits_{0}^{1}\varphi(x)\,dx
&=
\int\limits_{0}^{\frac{1}{2}}\varphi(x)\,dx
+
\int\limits_{\frac{1}{2}}^{1}\varphi(x)\,dx \\
&=
\int\limits_{0}^{1}
\frac{1}{a}\varphi(x)\,d\!\left(\frac{x}{2}\right)
+
\int\limits_{0}^{1}
\left[
\frac{1}{a}
+
\frac{1}{a}\varphi(x)
\right]
d\!\left(\frac{1}{2}+\frac{x}{2}\right) \\
&=
\frac{1}{2a}\int\limits_{0}^{1}\varphi(x)\,dx
+
\frac{1}{2a}\int\limits_{0}^{1}\varphi(x)\,dx
+
\frac{1}{2a}\int\limits_{0}^{1}dx \\
&=
\frac{1}{a}\int\limits_{0}^{1}\varphi(x)\,dx
+
\frac{1}{2a}.
\end{align*}

Hence, $\left(1-\frac{1}{a}\right)\int\limits_{0}^{1}\varphi(x)\,dx=\frac{1}{2a}$,
which implies
\[
\int\limits_{0}^{1}\varphi(x)\,dx
=
\frac{1}{2(a-1)}.
\]
\end{proof}

\section{Concluding remarks}

In this paper, we have studied the structural, fractal, and automodel properties of the function $\varphi$, which is a particular case of the class of functions defined by \eqref{phi_func}, corresponding to $r=1$. Extending these results to arbitrary $r\in N$, including the investigation of the topological, metric, and fractal properties of all level sets, appears to be a considerably more challenging problem.

Another promising direction for future research is the study of the distribution of the random variable $Y=\varphi(X)$, where $X$ is either uniformly or exponentially distributed on the interval $[0,1]$. We plan to address these questions in future work.


\end{document}